\newtheorem{corollary}{Corollary}
\newtheorem{theorem}{Theorem}
\newtheorem{lemma}{Lemma}
\newtheorem{proposition}{Proposition}
\newtheorem*{Conj}{Nikiel's Conjecture}
\theoremstyle{definition}
\newtheorem{definition}{Definition}
\newtheorem{example}{Example}
\newtheorem{remark}{Remark}
\begin{document}

\title[Continuous images of orderable compacta]{Cardinal functions on continuous images of orderable compacta and applications}

\author{Ahmad Farhat}
\address{Affiliation at time of research: Mathematical Institute, University of Wroc\l{}aw, pl. Grunwaldzki 2/4, 50-384 Wroc\l{}aw, Poland}
\address{Current affiliation: Chatham Financial Corp., ul. Rakowicka 7, 31-511 Krak\'ow, Poland}
\email{ahmad.farhat@math.uni.wroc.pl}
\subjclass[2000]{Primary 54A35, 54D30, 54D15}


\begin{abstract}
 The class of Hausdorff spaces that are continuous images of compact orderable spaces is studied by analyzing the relationship between the elements of this class and compact orderable spaces in a back-and-forth fashion. 
 Structure results for this class are then obtained, as well as continuum-theoretic embedding results.
 Applications to Boolean algebras are also demonstrated, specifically concerning the relationship between interval algebras and pseudo-tree algebras.
\end{abstract}

\maketitle


\section*{Introduction}

Let $X$ be a Hausdorff space that is a continuous image of a compact linearly ordered topological space, $\mathcal{P}$ be a cardinal invariant, and $\mathcal{A}$ a topological property. We consider the following back-and-forth scheme:
\begin{enumerate}
 \item \label{innab} If $X$ satisfies $\mathcal{P}$, is it possible to ``pull-back'' $\mathcal{P}$ from $X$ to some compact ordered space $K$ which maps continuously onto $X$?
 \item If (1) is possible, and we know that every compact ordered space that satisfies $\mathcal{P}$ must also satisfy $\mathcal{A}$, is it true then that $X$ also satisfies $\mathcal{A}$?
\end{enumerate}

The first systematic study of (\ref{innab}) was undertaken by Mardesic and Papic in \cite{MP}; 
a survey of the results obtained there recently appeared in section 5 of Mardesic's survey article \cite{Msurvey}.
We start this paper by expanding on the line of research of \cite{MP},
studying classical as well as recent cardinal invariants of an order-theoretic nature, the Noetherian types. 
Afterwards, having established relations between the properties of compact ordered spaces and of spaces they map continuously onto, we study the push-forward part of the scheme, obtaining several structure results for the class of Hausdorff spaces that are continuous images images of ordered compacta.
We then build on this to prove an embedding result, demonstrating part of the interplay between the continuum-theoretic and order-theoretic aspects of our study.
In the final part of the paper, we outline applications to Boolean algebras, specifically concerning isomorphisms between pseudo-tree algebras and subalgebras of interval algebras.
The classes of interval algebras and tree algebras are two of the more interesting classes of Boolean algebras when no extra set-theoretic axioms are assumed.

We recall at this point that one the late M.E. Rudin's breakthroughs was a proof of Nikiel's conjecture.

\begin{Conj}[M.E. Rudin \cite{Ru}] 
 A compact Hausdorff space is monotonically normal if and only if it is a continuous image of a compact linearly ordered space.
\end{Conj}

Thus, all of our results on Hausdorff spaces that are continuous images of compact ordered spaces can be formulated for monotonically normal compacta, and we use the corresponding terminology interchangeably.


\section{Preliminaries}

\subsection{Notation} 

We assume that all spaces are infinite and Hausdorff, and that all maps are continuous. A compactum is a compact space, and a continuum is a connected compactum. An ordered space which is also a continuum is called an \emph{arc}. For a space $X$, $2^X$ denotes the hyperspace of all non-empty closed subsets of $X$ equipped with the Vietoris topology. When $\mathcal{F}$ is a family of subsets of a space, $\mathcal{F}$ is always taken to be ordered by reverse inclusion. The closed unit interval $[0,1]$ will be denoted by $I$.

\subsection{Cardinal invariants}

We recall the definitions of some topological cardinal invariants. For a space $X$ and a point $x\in X$,
\begin{itemize}
 \item $w(X) = \min\{|B| : B$ is a base for $X\}$, the \emph{weight} of $X$;
 \item $\chi(X, x) = \min\{|B| : B$ is a local base for $X$ at $x\}$, the \emph{character} of $X$ at $x$;
 \item $\chi(X) = \sup\{\chi(X,x) : x\in X\}$, the \emph{character} of $X$;
 \item $d(X) = \min\{|D| : D$ is a dense subset of $X\}$, the \emph{density} of $X$;
 \item $c(X) = \sup\{|U| : U$ is a disjoint family of non-empty open subsets of $X\}$, the \emph{cellularity} of $X$;
 \item $\pi (X) = \min\{|B| : B$ is a $\pi$-base for $X\}$, the $\pi$\emph{-weight} of $X$;
 \item $\pi\chi (X, x) = \min\{|B| : B$ is a local $\pi$-base for $X$ at $x\}$, the $\pi$\emph{-character} of $X$ at $x$;
 \item $\pi\chi(X) = \sup\{\pi\chi(X,x) : x\in X\}$, the $\pi$\emph{-character} of $X$;
 \item $t(X,x)=\sup\big\{\min\{|Z|:Z\subseteq Y\ \wedge\ x\in {\rm cl}(Z)\}:Y\subseteq X\ \wedge\ x\in {\rm cl}(Y)\big\}$, the \emph{tightness} of $X$ at $x$;
 \item $t(X)=\sup\{t(X,x):x\in X\}$, the \emph{tightness} of $X$.
\end{itemize}

\begin{definition}
 Let $(P,<)$ be partially ordered set. Given a cardinal $\kappa$, $P$ is said to be $\kappa^{op}$-like if every bounded subset of $P$ has cardinality strictly less than $\kappa$.
\end{definition}

We list the definitions of some cardinal functions of an order-theoretic nature. As mentioned, any family of subsets of a space is always ordered by reverse inclusion. For a space $X$, a point $x\in X$, and a subset $A\subset X$,

\begin{itemize}
 \item $Nt(X)= \min\{\kappa : X$ has a $\kappa^{op}$-like base$\}$, the \emph{Noetherian type} of $X$;
 \item $\pi Nt(X)= \min\{\kappa : X$ has a $\kappa^{op}$-like $\pi$-base$\}$, the \emph{Noetherian} $\pi$\emph{-type} of $X$;
 \item $\chi Nt(X,A) = \min\{\kappa : X$ has a $\kappa^{op}$-like local base at $A\}$, the \emph{local Noetherian type} of $X$ at $A$;
 \item $\chi Nt(X,x)= \chi Nt(X,\{x\})$, the \emph{local Noetherian type} of $X$ at $x$;
 \item $\chi Nt(X)= \sup\{\chi Nt(X,x) : x\in X\}$, the \emph{local Noetherian type} of $X$;
\end{itemize}

\subsection{Ordered spaces and their images}

A triplet $(K,<,\tau)$, where $<$ is a linear order on the set $K$ and $\tau$ is the open-interval topology associated with $<$, is called a \emph{linearly ordered topological spaces}, or, in short, an \emph{ordered space}. A space is called orderable if it can be equipped with a linear order such that the open-interval topology and the original topology coincide.

In the metric category, the distinguished orderable compacta are the Cantor set $\mathcal{C}$ and the unit interval $I=[0,1]$. 
Classical results establish that any metric compactum is an image of $\mathcal{C}$, and that any separable orderable continuum is homeomorphic to $I$. 
In the non-metric category, the distinguished separable orderable compactum is the split interval, also called the double-arrow space. 
The split interval is the space obtained by equipping $(I\times 2) \setminus \{(0,0), (1,1)\}$ with the lexicographic order topology. 
As an analogue of the classical result that each metric compactum is an image of the Cantor set, a folklore result states that each separable monotonically normal compactum is an image of the split interval. 

\begin{definition}\label{def_split}
 For a function $\varLambda: I\to \omega$, let $I_\varLambda = \bigcup_{x\in I}\{x\}\times\{0,1,\cdots, \varLambda(x)\}$, and equip $I_\varLambda$ with the lexicographic order topology. If $S\subseteq [0,1]$, let $I_S = I_{\chi_S}$, where $\chi_S$ is the characteristic function on $S$. 
\end{definition}

It is clear that $I_{\mathbb{Q}\cap(0,1)}$ is homeomorphic to the Cantor set $\mathcal{C}$, and that $I_{(0,1)}$ is homeomorphic to the split interval.

\subsection{Reduced maps}

Let $(K,<)$ be an ordered set. A \emph{jump} in $K$ is a set $\{a,b\}\in [K]^2$ such that there is no point of $K$ between $a$ and $b$. A subset $C\subseteq K$ is said to be \emph{convex} if whenever $a,b,c\in K$ are such that $a,b\in C$ and $a<c<b$, then $c\in C$. An \emph{order component} of a subset $A\subseteq K$ is a subset $C\subseteq K$ which is maximal with respect to the properties ``$C \subseteq A$,'' and ``$C$ is convex.'' 

For a set $A$, let $\mathcal{P}(A)$ denote the power set of $A$.

\begin{definition}
 For $f: X\to Y$ a map between two spaces, the function $f^\sharp:\mathcal{P}(X)\to \mathcal{P}(Y)$ is defined by $f^\sharp(A)=Y\setminus f(X\setminus A)$ for all $A\subseteq X$.
\end{definition}

\begin{definition}
 Let $K$ be an ordered space, and $X$ and $Y$ be spaces.
\begin{itemize}
 \item A map $f: K\to X$ is said to be \emph{order-light} if for each $x\in X$, each order component of $f^{-1}(x)$ is degenerate.
 \item A surjective map $f: X\to Y$ is said to be \emph{irreducible} if $f(A)\neq Y$ for every proper closed subset $A\subseteq X$. Equivalently, $f$ is irreducible iff for each non-empty open subset $U\subseteq X$, $f^\sharp(U)\neq \emptyset$.
 \item  A map $f: K\to X$ is said to be \emph{reduced} if $f$ is both order-light and irreducible.
\end{itemize}
\end{definition}

The following lemma, due to Mardesic and Papic, and independently to Treybig, is the main tool we will be using throughout the paper.

\begin{lemma}[\cite{MP}, \cite{Tr}]\label{main_lm}
 Let $X$ be an image of an ordered compactum. Then there are an ordered compactum $K$ and a reduced map $f:K\to X$.
\end{lemma}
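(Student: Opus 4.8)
The plan is to build the reduced map in two stages, starting from any continuous surjection $g\colon L\to X$ with $L$ an ordered compactum: first I would shrink the domain to make the map irreducible, and then collapse the nondegenerate order components of the fibres to make it order-light, checking at each stage that we stay inside the class of ordered compacta and that irreducibility, once achieved, survives the collapse.

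First I would produce irreducibility by a minimality argument. Consider the family of closed sets $A\subseteq L$ with $g(A)=X$, ordered by inclusion. Given a chain, its intersection $A_0$ is closed, and for each $x\in X$ the sets $A\cap g^{-1}(x)$ form a chain of nonempty compacta, so $A_0\cap g^{-1}(x)\neq\emptyset$; hence $g(A_0)=X$, and Zorn's lemma yields a minimal such $M$. Minimality is precisely irreducibility of $h:=g|_M\colon M\to X$. Here I invoke the standard fact that a closed subspace of a LOTS is again a LOTS, its subspace topology coinciding with the order topology of the restricted order, so that $M$ is an ordered compactum.

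Next, for order-lightness I would collapse fibre components. On $M$ define $a\approx b$ iff $h$ is constant on the closed interval between $a$ and $b$; this is an equivalence relation whose classes are exactly the order components of the fibres of $h$. Each class is convex, and it is closed, since the closure of a convex set is convex and is contained in the (closed) fibre, so maximality forces a class to equal its own closure; being closed and convex in a compact LOTS, each class is an interval with a least and a greatest element. Let $K$ be the set of classes, linearly ordered by $D\prec E$ iff $d<e$ for $d\in D,\ e\in E$ (well defined by convexity), equipped with the order topology. The quotient map $q\colon M\to K$ is continuous, because the $q$-preimage of a ray $\{E:E\prec D\}$ is the ray $(\leftarrow,\min D)$ in $M$, and symmetrically on the right. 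As a continuous image of a compactum inside a (Hausdorff) order topology, $K$ is a compact LOTS and $q$ is a closed, hence quotient, map.

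Finally I would push $h$ through the quotient. Since every class lies inside a single fibre of $h$, the rule $f([m]):=h(m)$ defines a surjection with $f\circ q=h$, and $f$ is continuous because $q$ is a quotient map. Irreducibility is inherited: for a proper closed $B\subsetneq K$ the set $q^{-1}(B)$ is a proper closed subset of $M$, so $f(B)=h(q^{-1}(B))\neq X$. For order-lightness, a nondegenerate order component of $f^{-1}(x)$ would contain distinct classes $D\prec E$ with every intermediate class also in $f^{-1}(x)$; taking $d=\max D$ and $e=\min E$ gives $[d,e]\subseteq h^{-1}(x)$ convex, hence contained in one order component of the fibre and so in a single class, contradicting $D\neq E$. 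Thus $f\colon K\to X$ is reduced. I expect the main obstacle to be the middle step, namely verifying that the decomposition into order components yields an ordered compactum, i.e. that the order topology on $K$ agrees with the quotient topology and that $K$ is compact; this rests on the classes being closed convex intervals and on $q$ being a closed map, together with the background fact that closed subspaces of a LOTS are LOTS. The preservation of irreducibility under the collapse is then routine.
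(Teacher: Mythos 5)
Your proposal is correct and follows essentially the same route as the paper, whose proof is only a two-sentence sketch of exactly your two stages: Kuratowski--Zorn to pass to a minimal closed subset giving irreducibility, then identification of the order components of the fibres to achieve order-lightness (and you rightly do them in this order, since restricting can destroy lightness while collapsing preserves irreducibility, as you verify). One small repair: a closed subspace of a LOTS need not be a LOTS in its subspace topology --- it is in general only a GO-space (every GO-space embeds as a closed subspace of a LOTS, e.g.\ the Sorgenfrey line) --- but your argument survives because $M$ is compact and every compact GO-space is a LOTS, which is precisely the fact the paper itself invokes.
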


The proof of the lemma is more or less straightforward, and as the procedure of the proof itself will be needed in later sections, we describe it presently. First, Zorn's lemma is used to ensure irreducibility. Second, if $g:K'\to X$ is a map from an ordered compactum $K'$ onto $X$, then identifying the order components of fibers of $g$ yields an ordered compactum $K$ and an order-light map $f: K\to X$. 

The advantage of having a reduced map $f:K\to X$ is that it is in a sense minimal. First, there are no redundant open sets in the domain of $f$, and second, requiring $f$ to be order-light ensures that it does not collapse jumps (as does, for example, the standard map from the split interval onto $I$). Thus, we might heuristically expect greater similarity between the properties of $X$ and $K$ when $f$ is reduced. Indeed, this is the intuition that underlies much of Section \ref{secPullPush} below.


\section{Pull-back and push-forward of topological properties}\label{secPullPush}

\subsection{Pull-back}

We now study part (\ref{innab}) of the scheme mentioned in the introduction. Points (\ref{11}) and (\ref{22}) of Theorem \ref{main_thm} were proved in \cite{MP}, where their proofs occupy the bulk of the paper. Point (\ref{7}) also appeared in a paper of Milovich (Lemma 2.29 of \cite{Mi1}), and it holds in general for irreducible maps. In both cases, we include our concise proofs for convenience. Theorem \ref{main_thm} is followed by a series of examples showing that, wherever they may have appeared in the theorem, inequalities can be strict.
                                   
\begin{theorem}\label{main_thm}
 Let $X$ be an image of an ordered compactum, and let $f:K\to X$ be a reduced map from an ordered compactum $K$ onto $X$. The following conditions hold.
\begin{enumerate}
 \item \label{11} $\chi(K,k)\leq \chi(X,f(k))$ for every $k\in K$, whence $\chi(K)\leq \chi(X)$.
 \item \label{22} $c(K)= c(X)$, $d(K)=d(X)$, and $w(K)=w(X)$.
 \item \label{3} $\pi (K)=\pi (X)$.
 \item \label{4} $\pi\chi(K,k)\geq \pi\chi(X,f(k))$ for every $k\in K$, so that $\pi\chi (K)\geq \pi\chi (X)$. Furthermore, for every $x\in X$ there is a $k\in f^{-1}(x)$ such that $\pi\chi(K,k) = \pi\chi(X,x)$.
 \item \label{5} $t(K)\geq t(X)$.
 \item \label{6} $Nt(K) \leq Nt(X)$.
 \item \label{7} $\pi Nt(K)=\pi Nt(X)$.
 \item \label{8} For every $x\in X$, $\chi Nt(X,x) = \chi Nt(K,f^{-1}(x))$.
\end{enumerate}
\end{theorem}

\begin{proof}
We first note that the existence of an ordered compactum $K$ and a reduced map $f: K\to X$ is ensured by Lemma \ref{main_lm}.

\vspace{6pt}\noindent \textbf{(\ref{11})} Since $f$ is order-light, it follows that for any two distinct points $x,y\in K$ with $f(x)=f(y)$, there is an element $z\in K$ between $x$ and $y$ such that $f(z)\neq f(x)$. Let $k\in K$, and choose a local basis $\{V_\alpha:\alpha<\kappa\}$ for $X$ at $f(k)$, where $\kappa=\chi(X,f(k))$. For each $\alpha<\kappa$, let $U_\alpha$ be the order component of $f^{-1}(V_\alpha)$ containing $k$. 
Recall that the character $\chi$ and pseudo-character $\psi$ coincide on compact spaces.
It will be shown that $\bigcap_{\alpha<\kappa} U_\alpha=\{k\}$, so that $\chi(K,k)=\psi(K,k)\leq \kappa$. To this end, choose $m\in K$ such that $m\neq k$.

\begin{itemize}
 \item[\textit{Case 1.}] $f(k)\neq f(m)$. Let $\alpha<\kappa$ be such that $f(m)\notin V_\alpha$. Clearly $m\notin U_\alpha$.
 \item[\textit{Case 2.}] $f(k)=f(m)$. Then there is an $l$ between $k$ and $m$ such that $f(l)\neq f(k)$. Let $\alpha<\kappa$ be such that $f(l)\notin V_\alpha$. Since $U_\alpha$ is convex, $m\notin U_\alpha$.
\end{itemize}
In both cases, $m\notin U_\alpha$, and the result follows.

\vspace{6pt}\noindent \textbf{(\ref{22})} Since $f$ is irreducible, $c(X)=c(K)$. This is due to the fact that if $U$ is a non-empty open subset of $K$, then $f^\sharp(U)$ is a non-empty open subset of $X$.

To prove that $d(K)=d(X)$, choose a dense subset $D\subset X$, and for each $d\in D$ select a point $d'\in f^{-1}(d)$. Let $D'={\rm cl}(\{d':d\in D\})$. $D'=K$, for otherwise, the irreducibility of $f$ is contradicted. $K$ has thus a dense set of cardinality $|D|$.

Finally, let $\mathcal{B}$ be a base for $X$ of cardinality $w(X)$, and let $\mathcal{U}$ be the set of all order components of elements $f^{-1}(B)$, $B\in\mathcal{B}$. It is easy to see that $\mathcal{U}$ forms a base for $K$, since by (\ref{11}), it contains a local base for $K$ at $k$ for every point $k\in K$. Furthermore, $|\mathcal{U}|\leq |\mathcal{B}|\cdot c(X)=w(X)$. Hence, $w(K)\leq w(X)$. On the other hand, $f$ is a perfect map, and so does not increase weight.

\vspace{6pt}\noindent \textbf{(\ref{3})} Let $\mathcal{B}$ be a $\pi$-base for $X$ of cardinality $\pi (X)$, and let $U\subseteq K$ be a non-empty open subset of $K$. $f^\sharp(U)$ is then a non-empty open subset of $X$, and so it contains an element $V\in \mathcal{B}$. Since $f^\sharp(U)= X\setminus f(K\setminus U)$, $f^{-1}(V)\subseteq U$. This shows that $\mathcal{B}'=\{f^{-1}(V): V\in \mathcal{B}\}$ is a $\pi$-base for $K$, and so it follows that $\pi (K) \leq \pi (X)$. 
On the other hand, it is clear that since $f$ is irreducible, if $\mathcal{B}$ is a $\pi$-base for $K$, then $\{f^\sharp(U):U\in \mathcal{B}\}$ is a $\pi$-base for $X$, and so it follows that $\pi (K) \geq \pi (X)$.

\vspace{6pt}\noindent \textbf{(\ref{4})} Let $k\in K$. Let $\mathcal{B}$ be a local $\pi$-base for $K$ at $k$, and let $V$ be an open neighborhood of $f(k)$ in $X$. $f^{-1}(V)$ is an open neighborhood of $k$ in $K$, and so contains an element $U\in \mathcal{B}$. $V$ thus contains $f^\sharp(U)$. It follows that $\{f^\sharp(U):U\in \mathcal{B}\}$ is a local $\pi$-base for $X$ at $f(k)$. This shows that $\pi\chi(K,k)\geq \pi\chi(X,f(k))$.

Now let $x\in X$, and let $\mathcal{B}$ be a local $\pi$-base for $X$ at $x$. For each $U\in \mathcal{B}$, choose an order component $V_U$ of $f^{-1}(U)$, and let $\mathcal{B}'=\{V_U: U\in \mathcal{B}\}$. We will show that $\mathcal{B}'$ is a local $\pi$-base for $K$ at some point $k$ at which $f(k)=x$. Let $\mathcal{L}$ be a local base for $X$ at $x$. For every $L\in \mathcal{L}$ there is an element $B_L\in \mathcal{B}$ such that $\overline{B_L}\subset L$. 
Let $\mathcal{A}=\{B_L:L\in \mathcal{L}\}$, and let $\mathcal{A}'=\{\overline{V_U}: U\in \mathcal{A}\}$. By compactness, there is a subnet $S\subset\mathcal{A}'$ in the hyperspace $2^K$ which converges to a point $A\in 2^K$ which necessarily satisfies $f(A)=\{x\}$.
Since $f$ is order-light and all the members of $\mathcal{A}'$ are convex, $A$ is a singleton; say $A=\{k\}$. Obviously then, $\mathcal{B}'$ is a local $\pi$-base for $K$ at $k$, and it follows that $k$ satisfies $\pi\chi(K,k)\leq \pi\chi(X,f(k))$. Combining this with the first part of the proof, it follows that $\pi\chi(K,k) = \pi\chi(X,f(k))$.

\vspace{6pt}\noindent \textbf{(\ref{5})} This is a standard fact - it holds for any quotient map (see, e.g., \cite{Ju}).

\vspace{6pt}\noindent \textbf{(\ref{6})} 
Let $\mathcal{B}$ be a $\kappa^{op}$-like base for $X$, and let $\mathcal{U}$ be the family of all order components of all sets $f^{-1}(B)$, $B\in\mathcal{B}$. By (\ref{22}), $\mathcal{U}$ is a base for $K$. Suppose that there are an element $U\in \mathcal{U}$ and a collection $\mathcal{V}$ of $\kappa$-many elements of $\mathcal{U}$ such that $V\supseteq U$ for every $V\in \mathcal{V}$. Since distinct elements of $\mathcal{V}$ have non-empty intersection, it is clear that each pair of such elements of $\mathcal{V}$ are respectively order components of the inverse images of two distinct elements of $\mathcal{B}$. Now, $f^\sharp (U)$ is a non-empty open subset of $X$, and so contains an element $B\in \mathcal{B}$. $B$ is clearly then an element of $\mathcal{B}$ which is contained in $\kappa$-many elements of $\mathcal{B}$, which is a contradiction. Hence $\mathcal{U}$ is $\kappa^{op}$-like, and it follows that $Nt(K)\leq Nt(X)$. 

\vspace{6pt}\noindent \textbf{(\ref{7})} 
Let $\mathcal{B}$ be a $\kappa^{op}$-like $\pi$-base for $X$. By the proof of (3), $\mathcal{B}'=\{f^{-1}(V): V\in \mathcal{B}\}$ is a $\pi$-base for $K$, and by the proof of (6), $\mathcal{B}'$ is $\kappa^{op}$-like. Hence $\pi Nt(K)\leq \pi Nt(X)$. 

Assume on the other hand that $\mathcal{B}$ is a $\kappa^{op}$-like $\pi$-base for $K$.
By the proof of (3), the collection $\mathcal{U}=\{f^\sharp(U):U\in \mathcal{B}\}$ is a $\pi$-base for $X$.
Suppose that $\mathcal{U}$ is not $\kappa^{op}$-like. There are then an element $f^\sharp(U)\in \mathcal{U}$ and a collection $\mathcal{V}$ of $\kappa$-many elements of $\mathcal{U}$ such that $V\supseteq f^\sharp(U)$ for every $V\in \mathcal{V}$. Since $f^{-1}(f^\sharp(U))$ is non-empty and open, it contains an element $B\in \mathcal{B}$.
It is clear that $B$ must now be a subset of $\kappa$-many elements of $\mathcal{B}$, contradicting the fact that $\mathcal{B}$ is a $\kappa^{op}$-like.
$\mathcal{U}$ is then $\kappa^{op}$-like.
It follows that $\pi Nt(K)\geq \pi Nt(X)$.

\vspace{6pt}\noindent \textbf{(\ref{8})}
 This result holds in general for continuous surjections between compacta, and follows immediately from Lemma 2.22 of \cite{Mi1}.
\end{proof}

\begin{corollary}\label{cor1}
 Let $P$ be a property from $\{$first countable, ccc, separable, perfectly normal, second countable, has a countable $\pi$-base, has countable Noetherian type, has countable Noetherian $\pi$-type$\}$. Each space which is a continuous image of an ordered compactum and which satisfies $P$ is a continuous image of an ordered compactum which satisfies $P$.
\end{corollary}

\begin{proof}
 We only need to verify this for perfectly normality, in which case the result follows directly from the well-known observations that a perfectly normal compact space is ccc, and that a ccc ordered space is perfectly normal.
\end{proof}

\begin{example}\label{ex1}
 \textit{A reduced map may increase (local) character.} The one-point compactification $X$ of the uncountable discrete space of cardinality $\mathfrak{c}$ is a monotonically normal compactum which is not first countable, but is the reduced image of a first countable ordered compactum. Specifically, let $K$ be the 3-split interval; i.e. $[0,1]\times 3$ with the lexicographic order topology. Let $\sim$ be the equivalence relation which identifies all points in the top and bottom arrows of $K$; that is, for any $x, y\in [0,1]$ and any $i,j \in \{0,2\}$, $(x,i)\sim (y,j)$. It is clear that the quotient space $X = K/\sim$ is Hausdorff. The quotient map from $K$ onto $X$ is furthermore a reduced map. $X$ is thus a monotonically normal compactum which is not first countable, but is the reduced image of a first countable orderable compactum. This shows that the inequality may be strict in part (\ref{11}) of Theorem \ref{main_thm}.
\end{example}

Example \ref{ex1} is a discrete version of an example given in \cite{MP} to illustrate the same fact. Incidentally, it is also an example of a compact countably tight monotonically normal space with character $2^{\aleph_0}$, which contrasts sharply with the fact that tightness and character coincide on ordered spaces.

\begin{example}
 \textit{A reduced map may decrease (local) $\pi$-character.} Let $L\cup\{\infty\}$ be the one-point compactification of the long line $L$, and let $S$ be the quotient of $L\cup\{\infty\}$ which identifies the first and the last points of $L\cup\{\infty\}$. In analogy with the long line, $S$ can be called the \emph{long circle}. The quotient map, which we denote by $f$, is obviously a reduced map. $S$ is thus an example of a reduced image of a compact ordered space, $L\cup\{\infty\}$, where at one point, namely $\infty$, $\pi_\chi(L\cup\{\infty\}, \infty) > \pi_\chi(S, f(\infty))$. This shows that the inequalities in the first part of (\ref{4}) 
 of Theorem \ref{main_thm} may be strict.
\end{example}

\begin{example}
 \textit{A reduced map may decrease tightness.} Let $L\cup\{\infty\}$ be the one-point compactification of the long line $L$, and let $X$ be the quotient of $L\cup\{\infty\}$ which identifies all points of $\omega_1 \cup \{\infty\}$. $X$, which we call the $\omega_1$\emph{-flower}, is thus a reduced image of the compact ordered space $L\cup\{\infty\}$. Now, $t(L\cup\{\infty\})=\aleph_1$, whereas $t(X)=\aleph_0$. Thus, the inequality in (\ref{5}) of Theorem \ref{main_thm} may be strict.
\end{example}

\begin{example}
 \textit{A reduced map may increase Noetherian type.} Let $K=\omega_\omega \cup \{\omega_\omega\}$ be the one-point compactification of $\omega_\omega$. Also, let $X$ be the quotient of $K$ which has one equivalence class, the set $\{\omega_\alpha:\alpha\in\omega+1\}$, and denote the quotient map by $f$. Let $x=f(\omega)$. It is clear that $\chi Nt(X,x) = \aleph_\omega$, where as for any $k\in f^{-1}(x)$, $\chi Nt(K,k)< \aleph_\omega$. This shows that the first inequality in (\ref{6}) of Theorem \ref{main_thm} may be strict.
\end{example}

\begin{example}\label{setStage}
 \textit{A zero-dimensional monotonically normal compactum which contains no copy of $\omega_1$, no uncountable subspace of the real line, no uncountable subspace of the Sorgenfrey line, and no isolated points.} 
 \normalfont 
 The example here consists of copies of the space from Example \ref{ex1} built on top of each other to remove isolated points.
 Specifically, let $X_0$ be the space from Example \ref{ex1}.
 Assume now that for some $n< \omega$, the space $X_n$ has already been constructed.  
 For every isolated point $x$ of $X_n$, we ``glue'' a copy of $X_0$ to the space $X_n$ in such a way that this copy of $X_0$ has $x$ as its non-isolated point. The space thus obtained is denoted by $X_{n+1}$.
 For $n<m<\omega$, let $f^m_n:X_m\to X_n$ be the naturally defined projection, and let $X$ be the inverse limit of the inverse sequence $\{X_n, f^m_n, \omega\}$.
 We now construct an inverse sequence of compact ordered spaces with monotone and onto bonding maps whose inverse limit maps onto $X$.  
 Specifically, we start with the 3-split interval $K_0$, recalling that $X_0$ is an image of $K_0$, as described in Example \ref{ex1}. 
 Then, for $n > 0$, if the space $K_n$ has already been constructed, we let $K_{n+1}$ be the compact ordered space obtained from $K_n$ by replacing each isolated point in $K_n$ with a copy of the 3-split interval, equipping the resulting space with the natural order topology. 
 We then define all projections $\pi^m_n:K_m\to K_n$, $n<m<\omega$, in the obvious way. Since each factor space is compact and ordered and each bonding map is monotone, the inverse limit $K$ of the inverse sequence $\{K_n, \pi^m_n, \omega\}$ is a compact ordered space. 
 Moreover, it is easy to see that $K$ admits a map onto $X$.
 $X$ is thus a monotonically normal compact space. Furthermore, it is routine to check that $X$ has all the other requisite properties. 
 $X$ is an example of a universal pseudo-tree, as defined in \cite{Nps}.
\end{example}

It was proved in \cite{Fa} that SH implies that each monotonically normal compactum contains either an uncountable discrete subspace, an uncountable subspace of the real line, or an uncountable subspace of the Sorgenfrey line. Example \ref{setStage} could thus be seen as a preliminary construction setting the stage for the study of the 3-element basis conjecture for the class of uncountable subspaces of monotonically normal compacta.


\subsection{Push-forward}

We now utilize Theorem \ref{main_thm} 
to obtain results in the push-forward part of the scheme mentioned in the introduction.

The following is a folklore result.

\begin{proposition}\label{propo}
 Let $X$ be a space which is a continuous image of an ordered compactum. Then $X$ is ccc if and only if $X$ is perfectly normal.
\end{proposition}

\begin{proof}
 This follows from the proof of Corollary \ref{cor1} together with the fact that closed images of perfectly normal spaces are perfectly normal.
\end{proof}

Recall that a space is called \emph{non-Archimedean} iff it is T$_1$ and has a base any two members of which are either disjoint or comparable by inclusion. This notion was defined by Kurepa under the name ``spaces with ramified bases''.

\begin{theorem}\label{coro}
Each first countable monotonically normal compactum contains a dense non-Archimedean subspace.
\end{theorem}

\begin{proof}
 Let $X$ be a first countable monotonically normal compactum. 
 By (\ref{11}) of Theorem \ref{main_thm}, $X$ is an image of a first countable ordered compactum $K$. 
 Thus, by Lemma \ref{main_lm}, there are a first countable ordered compactum $K$ and a reduced map $f:K\to X$.
 By \cite{BT}, there is a dense subset $A$ of $K$ such that for each $x\in A$, $f^{-1}(f(x))=\{x\}$. 
 A theorem of Qiao and Tall states that any first countable ordered space has a dense non-Archimedean subspace \cite{QT}. 
 A minor adjustment of the proof of this theorem shows that the same result still holds for any subspace of a first countable ordered space. Hence, $A$ contains a dense non-Archimedean subspace $B$. To see that $Y=f(B)$ is a dense non-Archimedean subspace of $X$, it suffices to show that the bijection $f|_B:B\to Y$ is a closed map. Let $C\subseteq B$ be any closed subspace of $B$. Then $C=D\cap B$ for some closed subspace $D$ of $K$, and since $f$ is 1-1 on $B$,
 $$f|_B(C)=f(C)=f(D\cap B)=f(D)\cap f(B)=f(D)\cap Y\text{,}$$
 which is closed in $Y$.
\end{proof}

A \emph{Souslin} line is an ordered space which is ccc but not separable. 
Denote by \emph{SH} Souslin's Hypothesis that there are no Souslin lines.
It is known, for instance, that MA$_{\aleph_1}$ implies SH.

\begin{theorem}\label{theoro}
 Let $X$ be a ccc monotonically normal compactum. Then
\begin{enumerate}
  \item \label{111} $X$ contains a dense perfectly normal non-Archimedean (and orderable) subspace.
  \item \label{two} $X$ is either separable or contains a dense subspace which is a Souslin line.
\end{enumerate}
\end{theorem}

\begin{proof}
\noindent (\ref{111})
By Proposition \ref{propo}, $X$ is perfectly normal. A perfectly normal compact space is first countable. By Theorem \ref{coro}, $X$ contains a dense non-Archimedean subspace $D$ which is, of course, perfectly normal. A perfectly normal non-Archimedean space is orderable (\cite{Pu}).

\noindent (\ref{two})
Suppose $X$ is not separable. Recalling that cellularity and hereditary cellularity coincide in monotonically normal spaces \cite{Os}, it follows that $D$ from (\ref{111}) is orderable, ccc, and not separable. That is, $D$ is a Souslin line.
\end{proof}

\begin{corollary}\label{kiri}
 Each monotonically normal compactum satisfies one of the following alternatives:
  \begin{enumerate}
   \item $X$ contains an uncountable discrete subspace.
   \item $X$ is separable.
   \item $X$ contains a dense subspace which is a Souslin line.
  \end{enumerate}
\end{corollary}

Corollary \ref{kiri} thus implies, assuming SH, that each monotonically normal compactum either contains an uncountable discrete subspace or is separable. The following corollary establishes a variant of SH.

\begin{corollary} \label{zongi}
 SH holds if and only if each ccc monotonically normal compactum is separable.
\end{corollary}
  
\begin{proof}
 Assume that there exists a ccc monotonically normal compactum $X$ that is not separable. By Corollary \ref{kiri}, $X$ contains a dense subspace which is a Souslin line, and so $\neg\,$SH holds.
 
 Assume, on the other hand, that $\neg$ SH holds. The existence of a Souslin line implies the existence of a Souslin line $S$ which contains no dense metrizable subspace. The Dedekind completion of $S$ is a ccc ordered compactum, hence monotonically normal, that is not separable.
\end{proof}

\begin{remark}
The first part of the proof of Corollary \ref{zongi} also follows from Theorem \ref{main_thm}. For, since $X$ is not separable, it is the image of a ccc and non-separable ordered compactum $K$. That is, $K$ is a Souslin line.
\end{remark}

With the results on the existence of dense metrizable subspaces of monotonically normal compacta in place, one may wonder whether, if in analogy with Rosenthal compacta (Theorem 8 of \cite{To3}), each monotonically normal compactum contains a dense set of G$_\delta$ points. 
This is however not possible, as the Dedekind completion of an $\eta_1$-set would be a counterexample (cf. Chapter 13 of \cite{GJ}).
Nevertheless, we can still prove the following.

\begin{proposition}
 Each compact monotonically normal space contains a dense set of points of countable $\pi$-character.
\end{proposition}

\begin{proof}
 Let $X$ be a compact monotonically normal space. Then $X$ is the reduced image of a compact ordered space $K$. $K$ itself contains a dense set $A$ all of whose elements have countable $\pi$-character; this is folklore (see, e.g., \cite{Spa} for a proof). Since $X$ is a reduced image of $K$, part (\ref{4}) of Theorem \ref{main_thm} implies that $f(A)$ is a dense subset of $X$ such that $\pi_\chi(X,x)=\aleph_0$ for each $x\in f(A)$.
\end{proof}

\begin{theorem}
 Let $X$ be a monotonically normal compactum. The following are equivalent.
 \begin{enumerate}
  \item \label{s1} $X$ is metrizable.
  \item \label{s2} $X$ has an $\omega^{op}$-like base.
  \item \label{s3} $X$ has an $\omega_1^{op}$-like base.
 \end{enumerate}
\end{theorem}

\begin{proof}
 (\ref{s1}) implies (\ref{s2}) is Theorem 3.1 of \cite{Mi2}. (\ref{s2}) implies (\ref{s3}) trivially. To prove that (\ref{s3}) implies (\ref{s1}), assume that $X$ is a monotonically normal compactum that has an $\omega_1^{op}$-like base, and let $K$ be an ordered compactum that admits a reduced map $f:K\to X$. By (\ref{6}) of Theorem \ref{main_thm}, $K$ has an $\omega_1^{op}$-like base. By Theorem 4.5 of \cite{Mi2}, $K$ is metrizable, and hence, so is $X$. 
\end{proof}

Recall that a space $X$ is said to be \emph{homogeneous} iff for all $x, y\in X$ there is a homeomorphism $h:X\to X$ such that $h(x)=y$. 

\begin{proposition}\label{sarimari}
 Let $X$ be an image of a homogeneous ordered compactum. Then $\chi Nt(X)=\omega$.
\end{proposition}

\begin{proof}
 Let $L$ be a homogeneous ordered compactum that admits a map $f:L\to X$. By Theorem 2.32 of \cite{Mi1}, $\chi_K Nt(L)=\omega$, where $\chi_K Nt(L)$ is the supremum of the local Noetherian type of the compact subsets of $L$. By (\ref{8}) of Theorem \ref{main_thm}, for each $x\in X$, 
 $\chi Nt(X,x) = \chi Nt(L, f^{-1}(x))<\chi_K Nt(L)=\omega$.
 It follows that $\chi Nt(X)=\omega$. 
\end{proof}

Proposition \ref{sarimari} implies, for instance, that any space $X$ which is an image of the split interval satisfies $\chi Nt(X)=\omega$. 


\section{An embedding theorem}

For a space $X$ and a subset $Y$ of $X$, we denote by $K\,(X\setminus Y)$ the family of all components of $X\setminus Y$. Let $X$ be a locally connected continuum, and let $A$ be a non-empty and closed subset of $X$. $A$ is said to be a \emph{T-set} in $X$ if $bd(C)$ consists of exactly two points for each $C\in K\,(X\setminus A)$. If each $C\in K\,(X-A)$ is homeomorphic to $(0,1)$, $A$ is said to be a \textit{strong T-set} in $X$.

Let $\mathcal{A}$ be a family of subsets of a space $X$. $\mathcal{A}$ is said to be a \emph{null family} in $X$ if for each open cover $\mathcal{U}$ of $X$, the collection of all $A\in \mathcal{A}$ that are contained in no single $U \in \mathcal{U}$ is finite. When $X$ is compact, it can be shown that $\mathcal{A}$ is a null family in $X$ if and only if for every two disjoint closed subsets $G$ and $H$ of $X$, $A \cap G \neq \emptyset$ and $A \cap H \neq \emptyset$ can both hold for at most finitely many (at most countably many) $A\in \mathcal{A}$.

\begin{proposition}\label{illih}
 Let $f:X\to Y$ be a map from a space $X$ to a space $Y$. Let $\mathcal{A}$ be an (almost) null family of subsets of $X$. Then $\{f(A):A\in\mathcal{A}\}$ is an (almost) null family of subsets of $Y$.
\end{proposition}

\begin{proof}
 Let $\mathcal{V}$ be an open cover of $Y$. 
 $\mathcal{U} = \{f^{-1}(V): V\in\mathcal{V}\}$
 is then an open cover of $X$. 
 Since the collection of all $A\in \mathcal{A}$ that are contained in no single $U \in \mathcal{U}$ is finite (countable), it is clear that the collection of all $B\in \{f(A): A\in\mathcal{A}\}$ that are contained in no single $V \in \mathcal{V}$ is finite (countable).
\end{proof}

We note that Lemma \ref{main_lm} has been mostly used in the literature in conjunction with ``fillings'' of compact ordered spaces to obtain embedding theorems into locally connected continua. In particular, Nikiel proved in \cite{Ni} that each space which is a continuous image of a compact ordered space can be embedded as a strong T-set in a locally connected continuum which is a continuous image of an arc.

Since Nikiel's argument will be needed in the sequel, we sketch it presently. Let $X$ be a space which is a continuous image of a compact ordered space $K$ under a reduced map $f$. Embed $K$ into an arc $A$ by inserting a copy of the open unit interval $(0,1)$ into each jump of $K$. Consider the upper semi-continuous decomposition $\mathcal{A}$ of $A$ into sets $f^{-1}(x)$, $x \in X$, and one-point sets $\{a\}$, $a \in A\setminus K$. The quotient space $\widetilde{X}=A / \mathcal{A}$ is a locally connected continuum which is the continuous image of an arc, and in which the space $X$ embeds as a strong $T$-set.

\begin{example}
 \textit{Embedding Example \ref{ex1} as a strong T-set in a locally connected continuum that is the continuous image of an arc.}
 Let $X$ be the space from Example \ref{ex1}, and let $\widetilde{X}$ be the quotient of the lexicographically ordered square $[0,1]^2$ which identifies all points in $[0,1]\times 2$. It is clear that following the procedure in the sketch above will produce an embedding of $X$ as a strong T-set in $\widetilde{X}$.
\end{example}

The following theorem is an extension of Nikiel's result from \cite{Ni}.

\begin{theorem}\label{embedding}
Let $X$ be an image of an ordered compactum. Then there is a locally connected continuum $\widetilde{X}$ such that $\widetilde{X}$ is a continuous image of an arc, $X$ embeds as a strong T-set in $\widetilde{X}$, and the following conditions hold.
\begin{enumerate}
 \item \label{n1} For each non-isolated point $x\in X$, $\chi(\widetilde{X}, x)=\chi(X, x)$, and for each $x\in \widetilde{X}\setminus X$ or $x\in X$ isolated, $\chi(\widetilde{X}, x)=\omega$. Hence, that $\chi(\widetilde{X})=\chi(X)$.
 \item \label{n2} 
 $w(\widetilde{X}) = w(X)$.
 \item \label{n3} $\pi \chi (\widetilde{X}) \leq \pi \chi (X)$. 
 \item \label{n4} $t(\widetilde{X}) = t(X)$.
 \item \label{n5} For each non-isolated point $x\in X$, $\chi Nt(\widetilde{X}, x) \leq \chi Nt(X, x)$, and for each $x\in \widetilde{X}\setminus X$ or $x\in X$ isolated, $\chi Nt(\widetilde{X}, x)=\omega$. It follows that $\chi Nt(\widetilde{X}) \leq \chi Nt(X)$.
\end{enumerate}
\end{theorem}

\begin{proof} 
 To prove the theorem, we follow the aforementioned outline to embed $X$ as a strong T-set into a locally connected continuum $\widetilde{X}$ which is a continuous image of an arc $D$. 
 Recall that $D$ is obtained by inserting copies of $(0,1)$ into the jumps of a compact ordered space $K$ which admits a reduced map onto $X$.
 By construction therefore, $\widetilde{X}$ can be decomposed into $X$ and a disjoint family of copies of $(0,1)$.  
 Moreover, it is easy to see that the quotient map from $D$ onto $\widetilde{X}$ is reduced, and so we can make use of the results of Theorem \ref{main_thm}.
 
 \vspace{6pt}\noindent \textbf{(\ref{n1})} 
 Recall that a space is locally connected iff every component of every open set in the space is open. It is thus rather obvious that each $x\in \widetilde{X}\setminus X$ has countable character (each component of $\widetilde{X}$ is homeomorphic to $(0,1)$). We thus need only prove that for each $x\in X$, $\chi(\widetilde{X}, x)=\chi(X, x)$.
 
 Let $\mathcal{A}$ be the family of open arcs added to $X$ to obtain $\widetilde{X}$.
 For each $A\in \mathcal{A}$, let $I_A$ be the copy of $[0,1]$ in $\widetilde{X}$ obtained from the open arc $A$ by adding to it its endpoints, and let $\mathcal{I}=\{I_A:A\in\mathcal{A}\}$. 
 We first prove that $\mathcal{I}$ is a null family in $\widetilde{X}$. 
 Let $\mathcal{O}$ be the family of copies of $[0,1]$ in $D$ each of which is obtained from an open arc inserted in a jump of $K$ (in the process of building $D$) by adding to it its endpoints.
 By Proposition \ref{illih}, it suffices to show that $\mathcal{O}$ is a null family in $D$. 
 Suppose otherwise; i.e. that there are two disjoint closed subsets $G$ and $H$ of $D$, and a sequence 
 $\{O_i: i < \omega\} \subseteq \mathcal{O}$ 
 such that for all $i< \omega, O_i\cap G\neq \emptyset \neq O_i\cap H$. There is no loss of generality in assuming that this sequence is chosen in a monotone way, in the sense that a random sequence $\{o_i:o_i\in O_i\}_{i< \omega}$ is monotone. 
 For each $i\in \omega$, choose two points $k^0_i$ and $k^1_i$ such that $k^0_i \in O_i\cap G$ and $k^1_i\in O_i\cap H$. 
 It is readily verified that the limits of the sequences $\{k^0_i\}_{i<\omega}$ and $\{k^0_i\}_{i<\omega}$ can not be different. Since $G$ and $H$ are closed subsets of $K$, this contradicts the fact that $G\cap H=\emptyset$.

 Next, we show that the character at points of $X$ is preserved when we move to $\widetilde{X}$. 
 First, let $x\in X$ be an isolated point. Recalling that the map $f$ from the outline of Nikiel's argument is reduced, $x$ is an endpoint of at most two elements of $\mathcal{I}$. It follows that $\chi(\widetilde{X}, x)=\omega$.
 Let $x\in X$ be now non-isolated, $\kappa=\chi(X, x)$, and $\{U_\alpha:\alpha <\kappa\}$ be a family of open subsets of $X$ such that  $\bigcap_{\alpha<\kappa} U_\alpha=\{x\}$.
 Since $\mathcal{I}$ is a null family in $\widetilde{X}$, it follows that for each $\alpha < \kappa$, the subfamily $\mathcal{I}'\subset \mathcal{I}$ each of whose members intersects both subsets $\{x\}$ and $X\setminus U_\alpha$ of $\widetilde{X}$ is finite. 
 For each $\alpha<\kappa$ and each $i<\omega$, let 
 \begin{align*}
  V_\alpha^i = U_\alpha &\cup \bigcup\{A\in \mathcal{A}: (|I_A\cap U_\alpha|=2) \vee (x\notin I_A \wedge |I_A\cap U_\alpha|=1)\}\\
  &\cup \bigcup\{[x,\frac{1}{i+1}): (\exists I\in\mathcal{I})(\{x\}=U_\alpha\cap I)\}\text{,}
 \end{align*}
 where each $[x,\frac{1}{i})$ is the initial part of an arc from $\mathcal{I}$ that satisfies the relevant condition. Since there are at most finitely $I\in\mathcal{I}$ such that $\{x\}=U_\alpha \cap I$ (any such $I$ obviously belongs to $\mathcal{I}'$), it is easy to see that each $V_\alpha^i$ is open. 
 Clearly, moreover, $\bigcap_{\alpha<\kappa}\bigcap_{i<\omega} V_\alpha^i=\{x\}$, thereby finishing the proof.

 \vspace{6pt}\noindent \textbf{(\ref{n2})}
 Let $\mathcal{O}$ be the family of copies of $(0,1)$ inserted in jumps of $K$ to obtain $D$.
 Recall that the weight of an ordered space equals its order-density as an ordered set. 
 This means that already $w(K) \geq |\mathcal{O}|$. 
 Since each jump of $K$ is filled with a copy of $(0,1)$ in order to obtain $D$, it follows that $w(D)=w(K)$. 
 Therefore, by (\ref{22}) of Theorem \ref{main_thm}, $$w(\widetilde{X})=w(D)=w(K)=w(X)\text{.}$$
 
 \vspace{6pt}\noindent \textbf{(\ref{n3})}, \textbf{(\ref{n4})}
 These are easy observations and we omit the proofs.
  
 \vspace{6pt}\noindent \textbf{(\ref{n5})}
 Let $x\in X$ be a non-isolated point, and let $\mathcal{B}$ be a $\gamma^{op}$-like local base for $X$ at $x$.
 As in (\ref{n1}), let $\mathcal{A}$ be the family of all open arcs added to $X$ to obtain $\widetilde{X}$, and let $\mathcal{I}=\{I_A:A\in\mathcal{A}\}$ the family of copies $I_A$ of $[0,1]$ each of which is obtained from an open arc $A\in \mathcal{A}$ by adding to it its endpoints. 
 Also as in (\ref{n1}), for each element $B\in\mathcal{B}$ and each $i<\omega$, let $B^i\subset \widetilde{X}$ be defined by 
 \begin{align*}
  B^i = B &\cup \bigcup\{A\in \mathcal{A}: (|I_A\cap B|=2) \vee (x\notin I \wedge |I_A\cap B|=1)\}\\
  &\cup \bigcup\{[x,\frac{1}{i+1}): (\exists I\in\mathcal{I})(\{x\}=B\cap I), [x,\frac{1}{i+1})\subset I\}\text{.}
 \end{align*}
Now, let
 $\widetilde{\mathcal{B}} = \bigcup\{\{B^i:i< \omega\}: B\in \mathcal{B}\}\text{.}$
 By part (\ref{n1}), $\widetilde{\mathcal{B}}$ is a local base for $\widetilde{X}$ at $x$.
 Suppose that $\widetilde{B}_1$ and $\widetilde{B}_2$ are two elements of $\widetilde{\mathcal{B}}$ such that $\widetilde{B}_1\subset \widetilde{B}_2$. 
 First, $\widetilde{B}_1 = B_1^i$ and $\widetilde{B}_2 = B_2^j$ for some $B_1, B_2 \in \mathcal{B}$ and $i,j\in \omega$. 
 Assume now that $B_1\not\subset B_2$, and choose a point $y\in B_1\setminus B_2$. 
 Then $y\in \widetilde{B}_1\setminus\widetilde{B}_2$, which contradicts the fact that $\widetilde{B}_1\subset \widetilde{B}_2$.
 Hence, $B_1\subset B_2$.
 It thus follows that for any $x\in X$, $\chi Nt(\widetilde{X}, x) \leq \chi Nt(X, x)$. The other assertion, that $\chi Nt(\widetilde{X}, x)=\omega$ for each $x\in \widetilde{X}\setminus X$ or $x\in X$ isolated, is obvious.
\end{proof}

The next remark comments on the interplay between the order-theoretic and continuum-theoretic aspects of our study. Indeed, our main tool, Lemma \ref{main_lm}, can be viewed as the order-theoretic analogue of the monotone-light decomposition theorem from continuum theory.

\begin{remark} 
 \textit{An outline of how Theorem \ref{embedding} part (\ref{n1}) yields a proof of the fact that each first countable space which is a continuous image of an ordered compactum is a continuous image of a first countable ordered compactum (a consequence of Theorem \ref{main_thm} part (\ref{11})).} Let $X$ be a first countable space which is an image of a compact ordered space. $X$ embeds into a locally connected continuum $\widetilde{X}$ which is first countable and which is a continuous image of an arc. Nikiel's characterization of continuous images of arcs in \cite{Ni2} implies that $\widetilde{X}$ can be (strongly) approximated by finite dendrons, so that $\widetilde{X}$ is a continuous image of a dendron $T$ which is, in addition, first countable. We can show that $T$ is a continuous image of a first countable arc. Hence, $X$ is a continuous image of a first countable orderable compactum.
\end{remark}

\begin{corollary}
 Let $\kappa$ be a cardinal, and let $P$ be a property from $\{$countably tight, first countable, second countable, has $\pi$-character $\leq\kappa$, has local Noetherian character $\leq\kappa\}$. Each space $X$ which is a continuous image of an ordered compactum and which satisfies $P$ can be embedded as a strong T-set into a locally connected continuum $\widetilde{X}$ such that $\widetilde{X}$ satisfies $P$ and $\widetilde{X}$ is the continuous image of an arc.
\end{corollary}

The next example shows that Theorem \ref{embedding} can not be extended to cover cellularity and density.

\begin{example}
 \textit{Let $P$ be a property from $\{$ccc, separable, perfectly normal$\}$. A space which satisfies $P$ and is a continuous image of an ordered compactum can not always be embedded in a space which satisfies $P$ and which is the continuous image of an arc.}
 
 The split interval is a counterexample for any choice of $P$.
 To prove this, recall first that each continuous image of an arc is a locally connected continuum. 
 A theorem of Daniel in \cite{Da} states that a continuous image of an ordered compactum is metrizable iff it is separable and can be embedded as a G$_\delta$-set into a locally connected continuum. 
 
 Assume that the split interval $I_{(0,1)}$ can be embedded in a perfectly normal locally connected continuum. Since each closed subset of a perfectly normal space is a G$_\delta$-subset of the space, Daniel's theorem implies that the split interval is metrizable, which is a contradiction.
 
 Assume now that $I_{(0,1)}$ can be embedded in a ccc continuum $X$ which is the continuous image of an arc. By Proposition \ref{propo}, $X$ is perfectly normal. Hence, we arrive at a contradiction, as shown earlier.
 
 Finally, since a separable space is ccc, the previous analysis shows that $I_{(0,1)}$ can not be embedded in a separable continuum $X$ which is the continuous image of an arc.
\end{example}


\section{Applications to Boolean Algebras}

In this section, we prove results on cardinal invariants of Boolean algebras in the context of embeddings of pseudo-tree algebras in interval algebras. 

We first recall definitions and prove basic facts about interval algebras and pseudo-tree algebras.
Our presentation is geared towards giving a concise proof that the class of pseudo-tree algebras coincides with the class of subalgebras of interval algebras, before proceeding to applications.
One direction of this result was proved by Koppelberg and Monk in \cite{KM} (we prove this using a different technique), while the other was announced by Purisch in \cite{Purisch1}, with Purisch's proof not appearing in print apart from informal notes by J. D. Monk. We therefore give our proofs for completeness.

As usual, a space $X$ is called Boolean iff it is compact and zero-dimensional, and $Clop\,(X)$ denotes the collection of all closed and open subsets of $X$.
Also, for a Boolean algebra $B$, $Ult\,(B)$ denotes the set of all ultrafilters on $B$.
For unstated definitions, we refer the reader to the general references \cite{Koppelberg} and \cite{Monk} on Boolean algebras and their cardinal invariants, respectively.

\begin{definition}
 We say that a Boolean algebra $B$ is \emph{directly generated} by a subset $A\subset B$ iff $B$ is generated by $A$, and $A$ is closed under taking complements.
\end{definition}

\begin{lemma}\label{lemz}
 Let $B$ be a Boolean algebra, and let $s:B\to P(Ult(B))$ be the Stone map. If $B$ is directly generated by a subset $A\subset B$, then $s(A)$ forms a subbase for the topology on $Ult\,(B)$.
\end{lemma}

\begin{proof}
Recall that $s$ is a homomorphism from $B$ to $P(Ult(B))$, and that the topology on $Ult\,(B)$ is generated by $s(B)$.
Since $B$ is directly generated by $A$, each element $b\in B$ is a finite sum of finite products over $A$. 
Since $s$ is a homomorphism, it is clear then that $s(A)$ forms a subbase for the topology on $Ult\,(B)$.
\end{proof}

\subsection{Interval Algebras} 

Let $L$ be a linearly ordered set with first element $0_{L}$. Extend the linear order of $L$ to $L\cup\{\infty\}$, where $\infty$ is an element not contained in $L$, by setting $x<\infty$ for all $x\in L$. For $x,y\in L\cup\{\infty\}$, the set $[x,y)=\{z\in L\cup\{\infty\}: x\leq z < y\}$ is called the \emph{half-open interval} of $L$ determined by $x$ and $y$. The set of all finite unions of half-open intervals of $L$ is an algebra of sets over $L$. This Boolean algebra is called the \emph{interval algebra} of $L$, and denoted by $Intalg\,(L)$. $Intalg\,(L)$ can also be constructed for linear orders without first element by simply appending one.

It is not difficult to prove that a Boolean algebra is an interval algebra iff it has a set of generators which is linearly ordered under the natural partial ordering of the algebra. Indeed, this is how Mostowski and Tarski first introduced these algebras in \cite{MT}.

\begin{example}
The Sorgenfrey line has as basis the family 
$\mathcal{A} = Intalg\,(\mathbb{R})\text{.}$
The split interval, moreover, is the Stone space of the Boolean algebra $\mathcal{A}$.
\end{example}

\begin{proposition}\label{propsi}
 The dual space of an interval algebra is an ordered compactum.
\end{proposition}

\begin{proof}
 Let $Intalg\,(L)$ be the interval algebra of some linearly ordered set $L$.
 It is easy to see that $Intalg\,(L)$ is directly generated by a union of two chains; namely, $\{[l,\infty): l\in L\}$ and $\{[0_L, l): l\in L\}$. By Lemma \ref{lemz}, it follows that the dual space $K$ of $Intalg\,(L)$ is a compact Hausdorff space that has an open subbase which is the union of two chains. By \cite{vDW}, $K$ is orderable.
\end{proof}

Subalgebras of interval algebras need not be interval algebras; consider for instance the algebra of finite and co-finite subsets of $\omega_1$. This gives rise to the problem of characterizing the class of subalgebras of interval algebras. A first step towards resolving this problem is to notice that Proposition \ref{propsi} and standard facts on Stone duality yield the following result.

\begin{proposition}\label{props}
 The dual space of a subalgebra of an interval algebra is the continuous image of an ordered compactum.
\end{proposition}

\begin{proof}
Let $A$ be a subalgebra of an interval algebra $Intalg\,(L)$, and let $X$ be the dual space of $A$. By Proposition \ref{propsi}, the dual space of $Intalg\,(L)$ is an ordered compactum, which we denote by $K$. The embedding $e:A\to Intalg\,(L)$ is a 1-to-1 homomorphism of Boolean algebras, and so it induces a continuous and onto mapping $f:K\to X$.
\end{proof}

\subsection{Pseudo-tree Algebras}

For a partially ordered set $T$ and an element $t\in T$, let 
$t\downarrow = \{s\in T: s \leq t\}$ and
$t\uparrow = \{s\in T: s \geq t\}$.

A \emph{pseudo-tree} is a partially ordered set $(T,<)$ such that for each $t\in T$, the initial segment $t\downarrow$ of $T$ is linearly ordered. In the case that the initial segment of each element of $T$ is well-ordered, $T$ is called a \emph{tree}. Pseudo-trees are thus generalizations of both trees and linear orders.

For a pseudo-tree $T$, we define $Treealg\,(T)$, the \emph{pseudo-tree algebra of $T$}, to be the algebra of sets over $T$ generated by 
$\{t\uparrow: t\in T\}$.
If $T$ is a linearly ordered set, it is obvious that $Treealg\,(T)$ coincides with $Intalg\,(T)$.

Recall now that a family $\mathcal{C}$ of subsets of a set $X$ is called \emph{non-Archimedean} iff for all $c,d\in\mathcal{C}$, $c\subset d$, $d\subset c$, or $c\cap d = \emptyset$. By Theorem 2.3 of \cite{KM}, a Boolean algebra is a pseudo-tree algebra iff it has a non-Archimedean set of generators. 

The following proposition is based on J. D. Monk's development of Purisch's outline of his positive answer to the problem whether each subalgebra of an interval algebra is isomorphic to a pseudo-tree algebra \cite{Purisch1}. 

\begin{proposition}\label{props1}
 The dual algebra of a Boolean space which is a continuous image of an ordered compactum is isomorphic to a pseudo-tree algebra.
\end{proposition}

\begin{proof}
 Let $X$ be a Boolean space which is an image of an ordered compactum. By \cite{KM}, it suffices to show that $Clop\,(X)$ has a non-Archimedean set of generators.
By \cite{Nikiel5}, $X$ can be embedded as a strong T-set in a dendron $D$. Let $K\,(D\setminus X)$ be the collection of all components of $D\setminus X$, and fix $A\in K\,(D\setminus X)$. Fix a point $x_P$ in each component $P\in K\,(D\setminus X)$. By \cite{BNTT}, for each $P\in K\,(D\setminus X)$, $D\setminus \{x_{P}\}$ has exactly two components.
Let $U_A$ be a component of $D\setminus \{x_{A}\}$, and for each $N\in K\,(D\setminus X)$ different from $A$, let $U_N$ be the component of $D\setminus \{x_{N}\}$ which does not contain $x_A$. 
By \cite{BNTT}, the collection of all sets in
$$\mathcal{A} = \{U_P\cap X:P\in K\,(D\setminus X)\}$$
and their complements separates points in $X$.
$\mathcal{A}$ is thus a collection of closed and open subsets of $X$ generating $Clop\,(X)$.
We will show that $\mathcal{A}$ is non-Archimedean.
First, assume that for some $N\neq A$, $(U_N\cap X) \cap (U_A\cap X)\neq\emptyset$. 
Then $U_A\cup U_N$ is connected and does not contain $x_A$, which implies that $U_N\subset U_A$.
Now take two distinct components $N, P\in K\,(D\setminus X)\setminus \{A\}$, and assume that $(U_N\cap X) \cap (U_P\cap X)\neq \emptyset$. 
Clearly, $U_P\cup U_N$ is connected and does not contain $x_A$.
If $x_N\in U_P$, then $U_P\cup U_N\subset U_P$, and hence $U_N\subset U_P$. 
If $x_N\notin U_P$, then $U_P\cup U_N$ does not contain $x_N$, and so $U_P\cup U_N\subset U_N$, so that $U_P\subset U_N$.
\end{proof}

\subsection{The class of pseudo-tree algebras coincides with the class of subalgebras of interval algebras, and as such, is closed
under taking subalgebras}

In their treatise \cite{KM} on pseudo-tree algebras, Koppelberg and Monk used the compactness theorem of first order logic to produce an elegant proof that each pseudo-tree algebra is a subalgebra of an interval algebra (the result seems to have already been known to van Douwen - see, e.g., \cite{vD}). Koppelberg and Monk then asked whether the converse holds. The solution to their problem was announced by Purisch in \cite{Purisch1}. The surprising aspect of Purisch's approach is that it used results from continuum theory to prove a result on Boolean algebras. We prove Purisch's assertion in the following theorem.

\begin{theorem}\label{Pthm}
 Each subalgebra of an interval algebra is isomorphic to a pseudo-tree algebra.
\end{theorem}
 
\begin{proof}
 Let $B$ be a subalgebra of an interval algebra $Intalg\,(L)$, and let $X$ be the dual space of $B$. 
 By Stone duality, $B$ is isomorphic to $Clop\,(X)$.
 By Proposition \ref{props}, $X$ is an image of an ordered compactum. Proposition \ref{props1} now implies that $Clop\,(X)$ is isomorphic to a pseudo-tree algebra, which finishes the proof.
\end{proof}

We note that Theorem \ref{Pthm} was also given an algebraic proof by Heindorf in \cite{He}.

Next, we prove two propositions which, together with Propositions \ref{propsi} and \ref{props1}, complete the picture for the relationship between interval algebras and pseudo-tree algebras on the one hand, and compact ordered spaces and their continuous images on the other.

Recall that a family $\mathcal{C}$ of subsets of a set $X$ is called \emph{cross-free} iff for all $c, d\in \mathcal{C}$, $c\subset d$, $d\subset c$, $c\cap d = \emptyset$, or $c\cup d = X$.

\begin{proposition}\label{pripi}
 The dual algebra of an ordered Boolean space is isomorphic to an interval algebra.
\end{proposition}

\begin{proof}
 Let $K$ be an ordered Boolean space, and let $Clop\,(K)$ be the dual algebra of $K$. 
 By \cite{vDW}, $K$ has a subbase $B$ for closed and open sets which is the union of two chains; furthermore, we may choose $B$ so that each element of $B$ is either closed upwards or closed downwards. 
 $B$ is thus a direct set of generators for $Clop\,(K)$ which is the union of two chains, and so $Clop\,(K)$ has a set of generators which is linearly ordered. This implies that $Clop\,(K)$ is an interval algebra.
\end{proof}

\begin{proposition}\label{propzi}
 The dual space of a pseudo-tree algebra is the continuous image of a Boolean ordered compactum.
\end{proposition}

\begin{proof}
 Let $X$ be the dual space of a pseudo-tree algebra $Treealg\,(T)$. By \cite{KM}, $Treealg\,(T)$ has a non-Archimedean set of generators $B$. 
 Let $A$ be the closure of $B$ under taking complements. The set $A$ then directly generates $Treealg\,(T)$.
 Since $B$ is non-Archimedean, $A$ is cross-free.
 Let $s:Treealg\,(T)\to P(X)$ be the Stone map.
 Since $s$ is a homomorphism, it is straight-forward to verify $s(A)$ is cross-free.
 Now Lemma \ref{lemz}, in conjunction with Lemma 3 and Theorem 2 of \cite{Purisch2}, imply that $X$ is an image of a zero-dimensional compact ordered space, which finishes the proof.
\end{proof}

We notice now that Propositions \ref{pripi} and \ref{propzi} yield another proof of the Koppelberg-Monk result that each pseudo-tree algebra is isomorphic to a subalgebra of an interval algebra.

\begin{theorem}\label{propos}
 Each pseudo-tree algebra embeds into an interval algebra.
\end{theorem}

\begin{proof}
 Let $B$ be a pseudo-tree algebra. 
 By Proposition \ref{propzi}, the dual space $Ult\,(B)$ of $B$ is the continuous image of an ordered Boolean compactum $K$. 
 By Proposition \ref{pripi}, the dual algebra of $K$ is isomorphic to an interval algebra $A$. 
 The continuous map from $K$ onto $Ult\,(B)$ induces a homomorphic embedding of $B$ into $A$. 
\end{proof}

It thus follows, by Theorem \ref{Pthm} and Proposition \ref{propos}, that the class of pseudo-tree algebras coincides with the class of subalgebras of interval algebras. As such, the class of pseudo-tree algebras is closed under taking subalgebras.

\subsection{Applications}

We now present applications to Boolean algebras of the results obtained in Section \ref{secPullPush}. 

First, we recall the definitions of some cardinal invariants on Boolean algebras. 

\begin{definition}
 For Boolean algebras $A$ and $B$, and an ultrafilter $F$ on $A$,
 \begin{itemize}
  \item $B$ is said to be \emph{dense} in $A$ iff $B\subset A$ and for every $a\in A$ we can find $b\in B$ such that $b\subset a$;
  \item $\chi(A, F)=\min\{\kappa: F$ can be generated by $\kappa$-many elements of $A\}$ is the \emph{character} of $F$ in $A$;
  \item A subset $C\subset A\setminus\{0\}$ is said to be dense in $F$ if for all $a\in F$ there exists $b\in C$ such that $b\leq a$;
  \item $\pi\chi(A, F)=\min\{|C|:  C\subset A\setminus\{0\}$, $C$ is dense in $F\}$, the $\pi$\emph{-character} of $F$ in $A$;
  \item $t(A, F)=\min \{\kappa:$ if $C\subset Ult\,(A)$ and $F\subset \bigcup C$, then there is a subset $D\subset C$ of power at most $\kappa$ such that $F\subset\bigcup D\}$, the \emph{tightness} of $F$ in $A$.
 \end{itemize}
\end{definition}

\begin{definition}
 For a Boolean algebra $A$,
\begin{itemize}
 \item $c(A)= \sup \{|B|: B$ is a subset of $A$ whose elements are pairwise disjoint$\}$, the \emph{cellularity} of $A$;
 \item $d(A) = d(Ult\,(A))$, the \emph{topological density} of $A$;
 \item $\pi(A) = \min\{|B|: B$ is dense in $A\}$, the $\pi$\emph{-weight} or \emph{algebraic density} of $A$;
 \item $\chi(A)=\sup\{\chi(A,F):F$ is an ultrafilter on $A\}$, the \emph{character} of $A$;
 \item $t(A) = \sup\{t(A, F): F$ is an ultrafilter on $A\}$, the \emph{tightness} of $A$;
 \item $\pi Nt(A) = \min\{|B|: B$ is dense in $A$ and $B$ with the reverse natural ordering is $\kappa^{op}$-like$\}$, the \emph{Noetherian $\pi$-weight} or \emph{Noetherian algebraic density} of $A$.
\end{itemize}
\end{definition}

The next theorem further illustrates the interconnection between the topological and algebraic aspects of our study. 

\begin{theorem}\label{tree_card_inv}
 Each infinite pseudo-tree algebra $B$ embeds in an interval algebra $A$ such that the following conditions hold.
 \begin{enumerate}
  \item \label{aa} $|A|=|B|$.
  \item \label{bb} $d(A)=d(B)$ and $c(A)=c(B)$.
  \item \label{c} For every ultrafilter $F$ on $B$, $\chi(A,F')\leq \chi(B, F)$, where $F'$ is any ultrafilter on $A$ generated by $F$. It follows that $\chi(A)\leq \chi(B)$.  
  \item \label{d} $\pi(A) = \pi(B)$.
  \item \label{f} $\pi Nt(A)=\pi Nt(B)$.
 \end{enumerate}
\end{theorem}

\begin{proof}
 Let $B$ be a pseudo-tree algebra. By Theorem \ref{propos}, $B$ embeds into an interval algebra $A_0$. Let $g:Ult\,(A_0)\to Ult\,(B)$ be the onto mapping induced by the embedding of $B$ into $A_0$. By Proposition \ref{propsi}, $Ult\,(A_0)$ is a compact ordered space. 
 Therefore, Lemma \ref{main_lm} implies that there is an ordered compactum $K$ such that $K$ admits a reduced map $f:K\to Ult\,(B)$. $K$ is obtained from $Ult\,(A_0)$ by first taking a closed subspace $K_1\subset Ult\,(A_0)$, and then taking a continuous image $K$ of $K_1$. 
 The process of taking a closed subspace of $Ult\,(A_0)$ preserves zero-dimensionality. 
 $Clop\,(K_1)$ is thus, by Proposition \ref{pripi}, an interval algebra (which also happens to be a quotient algebra of $A_0$). 
 The problematic procedure is in taking a continuous image of $K_1$, as this might produce an ordered compactum $K$ which is not zero-dimensional, and we now deal with this constraint.
 
 Let $A_1=Clop\,(K_1)$ be the quotient algebra of $A_0$ associated with taking the closed (and zero-dimensional) subspace of $K_1$ of $Ult\,(A_0)$ to produce an irreducible map $f_1:K_1\to Ult\,(B)$.
 Let $\mathcal{C}$ be the collection of all non-degenerate convex components of all fibers of the map $f_1$. 
  The ordered compactum $K$ is obtained as a quotient of $K_1$, where the quotient map $q:K_1\to K$ collapses each element of $\mathcal{C}$ to one point. 
 If $K$ is not zero-dimensional, 
 since density coincides with hereditary density in monotonically normal spaces \cite{Ga},
 we can choose a dense subset $D\subset q(\bigcup\mathcal{C})$ such that $|D|\leq d(K)$ and split each point of $D$ into two points, as in Definition \ref{def_split}. 
 The obtained space $K_2$ is therefore a zero-dimensional compact ordered space that admits a map onto $K$. 
 Moreover, since the weight of an ordered space coincides with its order-density as an ordered set,
 $w(K_2)\leq w(K)+ |D| = w(K)$.
 Finally, since perfect maps between compact spaces do not increase weight, $w(K_2)=w(K)$.
 
 Now, since $f_1$ is irreducible, each element of $\mathcal{C}$ has empty interior. 
 Since any element $C\in \mathcal{C}$ is convex, $C$ consists of two points, which necessarily form a jump in $K_1$. 
 It is thus easy to see that $K_2$ can actually be obtained by factorizing the map $q$ through a zero-dimensional ordered compactum by choosing not collapse some jumps in $K_1$ that the map $q$ collapses. In particular, the only jumps collapsed are elements $C\in\mathcal{C}$ which satisfy $q(C)\notin D$.
 By construction therefore, $K_2$ is an image of $K_1$.
 
 Let $f_2:K_2\to K$ be the quotient map, as per the construction above. 
 It follows that the dual algebra $A$ of $K_2$ is an interval algebra which contains an isomorphic copy of $B$. 
 Moreover, by construction, $A$ is a subalgebra of a quotient algebra of $A_0$.
 
 \vspace{6pt}\noindent \textbf{(\ref{aa})} 
 We have established that $w(Ult\,(A))=w(K_2)=w(K)$. On the other hand, $w(K)=w(Ult\,(B))$ by part (\ref{22}) of Theorem \ref{main_thm}. It follows that we may assume that $|A|=|B|$.
 
 \vspace{6pt}\noindent \textbf{(\ref{bb})}
 By Theorem \ref{main_thm} part (\ref{22}), $d(K)=d(Ult\,(B))$ and $c(K) = c(Ult\,(B))$. Moreover, it is clear that $d(K_2)=d(K)$ and $c(K_2)=c(K)$. These facts imply that $d(A)=d(B)$ and $c(A) = c(B)$.
 
 \vspace{6pt}\noindent \textbf{(\ref{c})}
  It is easy to see that for any $k\in K_2$, $\chi(K_2,k) \leq \chi(K,f_2(k))$. 
 Moreover, by Theorem \ref{main_thm} part (\ref{11}), 
 $\chi(K,l)\leq \chi(Ult\,(B),f(l))$ for every $l\in K$.
 It follows that for any $k\in K_2$,
 $$\chi(K_2,k) \leq \chi(K,f_2(k))\leq \chi(Ult\,(B),f\circ f_2 (k))\text{.}$$
 Now, let $F$ be an ultrafilter on $B$, and let $F'$ be any ultrafilter on $A$ generated by $F$. Notice that
 $$\chi(A, F') = \chi(K_2, F') \leq \chi(Ult\,(B),f\circ f_2 (F')) = \chi(Ult\,(B), F) = \chi(B,F)\text{.}$$
 The result follows.
 
 \vspace{6pt}\noindent \textbf{(\ref{d})} 
 It is clear that $\pi(K_2) = \pi(K)$, and so by Theorem \ref{main_thm} part (\ref{3}),
 $$\pi(A) = \pi(K_2) = \pi(K) = \pi(Ult\,(B)) = \pi(B)\text{.}$$
 
 \vspace{6pt}\noindent \textbf{(\ref{f})} 
 The map $f_2$ was constructed so as to be irreducible. This implies, as in the proof of Theorem \ref{main_thm} part (\ref{7}), that $\pi Nt(K_2)=\pi Nt(K)=\pi Nt(Ult\,(B))$. It follows that $\pi Nt(A) = \pi Nt(B)$.
\end{proof}

\begin{definition}
 Let $B$ be a Boolean algebra.
 \begin{itemize}
  \item A set $C$ of non-zero elements of $B$ is said to be \emph{centered} if it satisfies the finite intersection property. $B$ is called $\kappa${-centered} if $B\setminus \{0\}$ is the union of $\kappa$-many centered sets (in case $\kappa=\omega$, we also say that $B$ is $\sigma$\emph{-centered}). When $B$ is infinite, 
$d(B) = \min\{\kappa: B$ is $\kappa$-centered$\}$.
  \item $B$ is called \emph{separable} iff $B$ has a countable dense subset iff $\pi(B)=\omega$.
 \end{itemize}
\end{definition}
 
The following then is a corollary of Theorem \ref{tree_card_inv}.

\begin{corollary} 
 Let $P$ be a property of Boolean algebras in $\{$countable, separable, $\sigma$-centered, ccc, each ultrafilter is countably generated, has countable Noetherian $\pi$-weight$\}$. 
 Each pseudo-tree algebra which satisfies $P$ is a subalgebra of an interval algebra which also satisfies $P$.
\end{corollary}


\section*{Acknowledgments}
 The author wishes to thank Prof.s A. Blaszczyk, W. Marciszewski, J. Nikiel and P. Nyikos for critical remarks and suggestions.

\bibliographystyle{amsplain}

\end{document}